\newtheorem{theorem}{Theorem}
\newtheorem{lemma}[theorem]{Lemma}
\newenvironment{proof}{\noindent{\bf Proof.}}{\hspace*{2mm}~$\square$}
\newenvironment{proofof}[1]{\noindent{\bf Proof of #1.}}{\hspace*{2mm}~$\square$}
\newcommand{\N}{\mathbb{N}}
\newcommand{\R}{\mathbb{R}}
\newcommand{\A}{\mathscr A}
\newcommand{\B}{\mathscr B}
\newcommand{\ind}{\mathbf{1}}
\newcommand{\n}{\hspace*{-5pt}}
\DeclareMathOperator{\card}{card}
\DeclareMathOperator{\uniform}{Uniform \,}
\begin{document}

\begin{frontmatter}
\title     {Rigorous proof of the Boltzmann-Gibbs distribution \\ of money on connected graphs}
\runtitle  {Boltzmann-Gibbs distribution of money}
\author    {Nicolas Lanchier\thanks{Research supported in part by NSA Grant MPS-14-040958.}}
\runauthor {N. Lanchier}
\address   {School of Mathematical and Statistical Sciences \\ Arizona State University \\ Tempe, AZ 85287, USA.}

\maketitle

\begin{abstract} \ \
 Models in econophysics, i.e., the emerging field of statistical physics that applies the main concepts of traditional physics to economics,
 typically consist of large systems of economic agents who are characterized by the amount of money they have.
 In the simplest model, at each time step, one agent gives one dollar to another agent, with both agents being chosen independently and
 uniformly at random from the system.
 Numerical simulations of this model suggest that, at least when the number of agents and the average amount of money per agent are large,
 the distribution of money converges to an exponential distribution reminiscent of the Boltzmann-Gibbs distribution of energy in physics.
 The main objective of this paper is to give a rigorous proof of this result and show that the convergence to the exponential distribution
 is universal in the sense that it holds more generally when the economic agents are located on the vertices of a connected graph and interact
 locally with their neighbors rather than globally with all the other agents.
 We also study a closely related model where, at each time step, agents buy with a probability proportional to the amount of money they
 have, and prove that in this case the limiting distribution of money is Poissonian.
\end{abstract}

\begin{keyword}[class=AMS]
\kwd[Primary ]{60K35, 91B72}
\end{keyword}

\begin{keyword}
\kwd{Interacting particle systems, econophysics, Boltzmann-Gibbs distribution.}
\end{keyword}

\end{frontmatter}


\section{Introduction}
\label{sec:intro}

\indent This paper is concerned with variants of one of the simplest models in the relatively new field of
 econophysics~\cite{mantegna_stanley_1999, stanley_et_al_1996}, the branch of statistical physics focusing on problems in economics and finance, and also
 a subfield of sociophysics~\cite{galam_2004, galam_et_al_1982}.
 Models in this field consist of systems with a large number of interacting economic agents, and we refer to~\cite{yakovenko_et_al_2009} for a review.
 The models we consider are simple variants of the model introduced and studied via numerical simulations in~\cite{dragulescu_yakovenko_2000}.
 See also~\cite{bennati_1988, bennati_1993}.
 Their model consists of a system of~$N$ interacting economic agents that are characterized by the number of dollars they possess.
 The system evolves in discrete time as follows:
 at each time step, one agent chosen uniformly at random gives one dollar to another agent again chosen uniformly at random, unless the first agent has no
 money in which case nothing happens. \\
\indent The main idea of econophysics, and more generally sociophysics, is to view human beings as particles, and collisions between two particles as
 interactions between two individuals.
 The fundamental law of equilibrium statistical mechanics is the so-called Boltzmann-Gibbs distribution which states that the probability~$p_e$ that
 a particle has energy~$e$ is well approximated by the exponential random variable.
 More precisely,
 $$ p_e \approx \mu \,e^{- \mu e} \quad \hbox{where} \quad \mu = 1/T = \,\hbox{inverse of the temperature}. $$
 The numerical simulations in~\cite{dragulescu_yakovenko_2000} suggest that this principle extends to the distribution of money in the model above.
 More precisely, letting~$p_d$ be the probability that an agent has~$d$ dollars at equilibrium and letting~$T$ be the average number of dollars per agent,
\begin{equation}
\label{eq:exponential}
  p_d \approx \mu \,e^{- \mu d} \quad \hbox{where} \quad \mu = 1/T.
\end{equation}
 This holds when both the total number of agents and the average number of dollars per agent are large.
 Note that the amount of money an agent has in the context of econophysics can be viewed as the analog of the energy of a particle in physics.
 Also, by analogy with physics, the average number of dollar per agent~$T$ is called the money temperature in econophysics. \\
\indent As far as we know, the convergence to the exponential distribution has only been obtained via numerical simulations.
 This paper gives in contrast a rigorous proof of this result and shows that the convergence is universal in the sense that
 it holds regardless of the underlying network of interactions, i.e., when each agent can only interact with a fixed set of neighbors.
 We also consider a natural variant of this model where agents buy with a probability proportional to the amount of money they have and prove that, in
 this context, the distribution of money converges instead to the~Poisson distribution.
 This again holds for fairly general networks of interactions.


\section{Model description}
\label{sec:models}

\indent To describe our models formally, let~$G = (V, E)$ be a finite connected graph.
\begin{itemize}
 \item Each vertex represents an economic agent and we let~$N = \card (V)$ be the total number of agents present in the system. \vspace*{3pt}
 \item The edge set~$E$ has to be thought of as an interaction network, thus modeling how the agents interact with each other, and
       we call two agents nearest neighbors if the corresponding vertices of the graph are connected by an edge. \vspace*{3pt}
 \item Each agent is characterized by the amount of money she owns, which we assume to be an integer when measured in number of dollars, and we
       let~$M$ be the total number of dollars present in the system at all times (conservative system).
\end{itemize}
 The models we are interested in are discrete-time Markov chains that keep track of the amount of money each of the agents owns.
 Under our assumption that this amount of money is integer-valued, the state at time~$t \in \N$ is a spatial configuration
 $$ \xi_t : V \to \N \quad \hbox{where} \quad \xi_t (x) = \hbox{the number of dollars agent~$x$ has}. $$
 Since the total amount of money in the system is preserved by the dynamics, the state space of the Markov chains consists of the
 following subset of spatial configurations:
\begin{equation}
\label{eq:state-space}
  \begin{array}{l} \A_{N, M} = \{\xi \in \N^V : \sum_{x \in V} \,\xi (x) = M \}. \end{array}
\end{equation}
 In both models, the dynamics consists in moving one dollar from a randomly chosen vertex to a randomly chosen neighbor at each time step.
 In particular, thinking of the graph as a directed graph where each edge~$\{x, y \}$ can have two different orientations~$\overset{\to}{xy}$
 and~$\overset{\to}{yx}$, at each time step, the system jumps from configuration~$\xi$ to one of the configurations
\begin{equation}
\label{eq:transition}
  \xi^{\overset{\to}{xy}} (z) = \left\{\begin{array}{lcl} \xi (z) - \ind \{z = x \} + \ind \{z = y \} & \hbox{when} & \xi (x) \neq 0 \vspace*{2pt} \\
                                                          \xi (z)                                     & \hbox{when} & \xi (x)   =  0 \end{array} \right.
\end{equation}
 for some~$\{x, y \} \in E$.
 The only difference between the two models is that, in the first model, all the pairs of neighbors are equally likely to trade
 at each time step whereas, in the second model, each dollar is equally likely to be spent at each time step, meaning that agents buy with
 a probability proportional to the number of dollars they have. \vspace*{5pt} \\
\noindent {\bf Model 1} -- At each time step, we choose an oriented edge, say~$\overset{\to}{xy}$, uniformly at random and, if there is at least
 one dollar at~$x$, move one dollar from vertex~$x$ to vertex~$y$.
 This is formally described by the discrete-time Markov chain with transition probabilities
 $$ p (\xi, \xi^{\overset{\to}{xy}}) = \frac{1}{2 \card (E)} \quad \hbox{for all} \quad \{x, y \} \in E. $$
 Note that the model introduced in~\cite{dragulescu_yakovenko_2000} is simply the particular case obtained by assuming that the connected graph~$G$
 is the complete graph with~$N$ vertices. \vspace*{5pt} \\
\noindent {\bf Model 2} -- At each time step, we choose one dollar uniformly at random from the system and move it to one of the nearest neighbors
 chosen uniformly at random.
 This is now formally described by the discrete-time Markov chain with transition probabilities
 $$ p (\xi, \xi^{\overset{\to}{xy}}) = \frac{\xi (x)}{M \deg (x)} \quad \hbox{for all} \quad \{x, y \} \in E $$
 where the target configuration is again defined as in~\eqref{eq:transition} and where~$\deg (x)$ refers to the degree of vertex~$x$, i.e., the
 number of neighbors of that vertex.


\section{Main results}
\label{sec:results}

\begin{figure}[h!]
\centering
\includegraphics[width=0.90\textwidth]{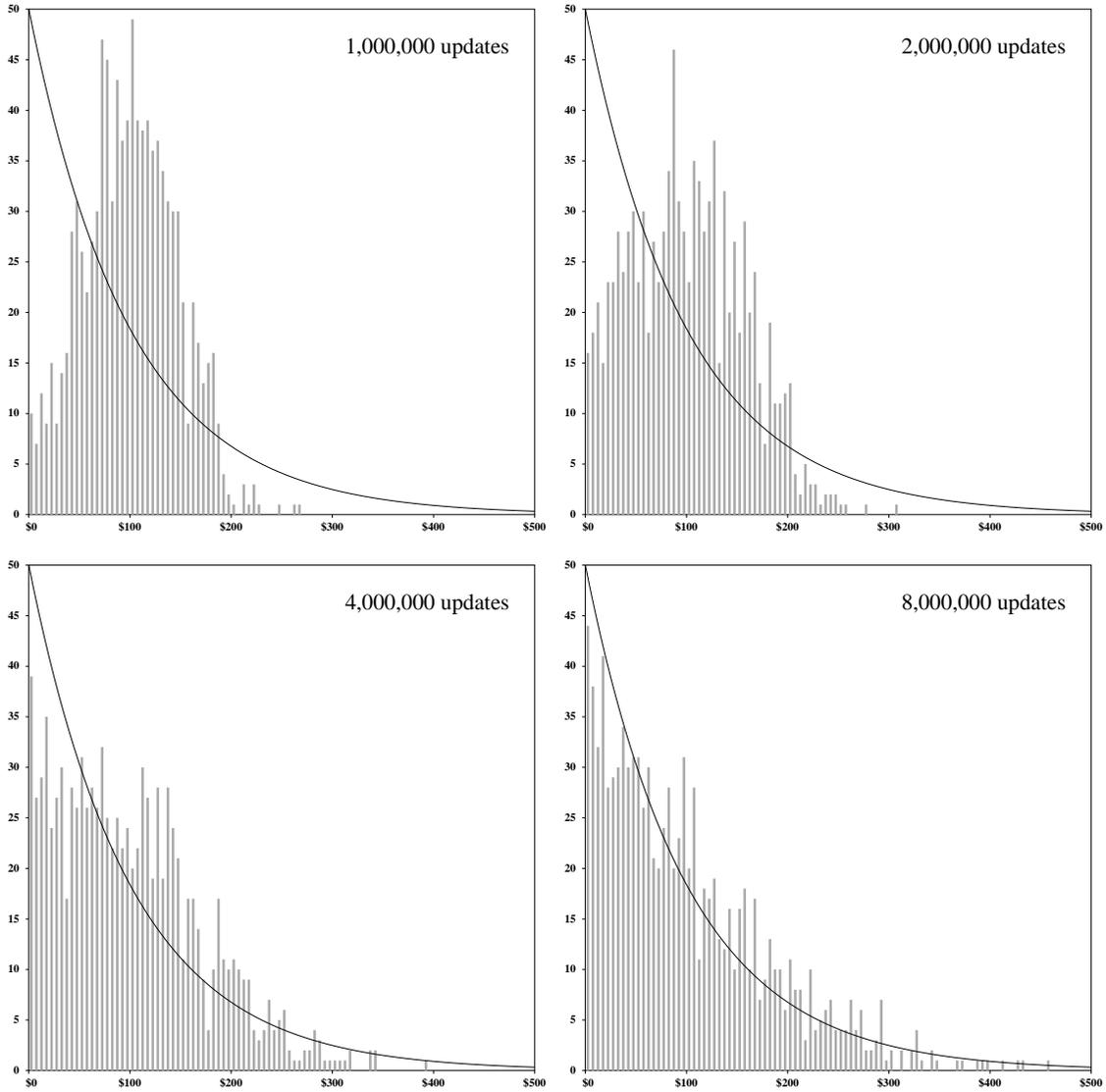}
 \caption{\upshape Simulation results for Model~1 on the complete graph with~1000 vertices, each starting with~$\$100$.
  The gray histograms represent the distribution of money after the number of updates indicated in the top-right corner of the pictures.
  The black solid curve is the limiting exponential distribution found in Theorem~\ref{th:link}.}
\label{fig:exp-dist}
\end{figure}

\begin{figure}[h!]
\centering
\includegraphics[width=0.90\textwidth]{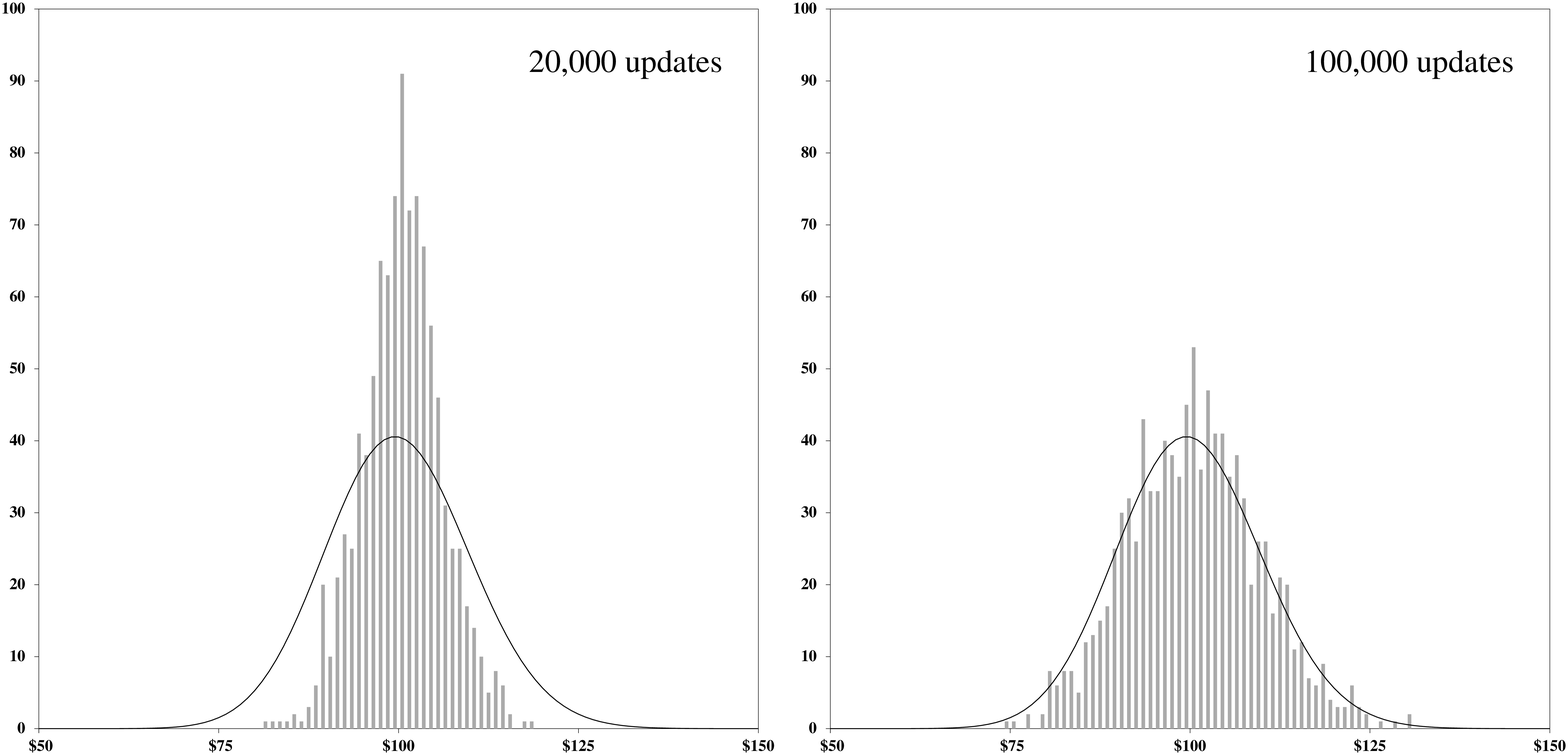}
 \caption{\upshape Simulation results for Model~2 on the complete graph with~1000 vertices, each starting with~$\$100$.
  The gray histograms represent the distribution of money after the number of updates indicated in the top-right corner of the pictures.
  The black solid curve is the limiting Poisson distribution found in Theorem~\ref{th:dollar}.}
\label{fig:poisson-dist}
\end{figure}

\indent For both models, we first study the limiting behavior for all values of the number~$N$ of individuals as time goes to
 infinity and then simplify the probability that an individual has~$d$ dollars at equilibrium in the large population limit.
 Recall also that~$M$ denotes the total amount of money present in the system at all times. \vspace*{5pt} \\
\noindent {\bf Model 1} -- Because the underlying network of interactions is a finite connected graph, the first model is a
 finite irreducible discrete-time Markov chain.
 The process turns out to also be aperiodic from which it follows that there is a unique stationary distribution to which the
 process converges starting from any initial configuration.
 Using time-reversibility and combinatorial techniques, this stationary distribution can be computed explicitly for all~$N$.
 Taking the large population limit as~$N \to \infty$, some basic algebra allows to further simplify the expression of the
 stationary distribution.
 More precisely, we have the following theorem.
\begin{theorem}[exponential distribution] --
\label{th:link}
 For model~1,
 $$ \lim_{t \to \infty} P (\xi_t (x) = d) = {M + N - d - 2 \choose N - 2} \bigg/ {M + N - 1 \choose N - 1}. $$
 In particular, for all fixed~$d$,
 $$ \lim_{N \to \infty} \,\lim_{t \to \infty} P (\xi_t (x) = d) = \frac{e^{- d/T}}{T} + o \bigg(\frac{1}{T} \bigg) \quad \hbox{where} \quad T = M/N. $$
\end{theorem}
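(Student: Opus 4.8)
The plan is to identify the stationary distribution explicitly, then extract its asymptotics. First I would observe that, since the transition probabilities of Model~1 take the constant value $1/(2\,\card(E))$ on every genuine move, the uniform measure $\pi$ on $\A_{N,M}$ is reversible. Indeed, if $\eta = \xi^{\overset{\to}{xy}}$ is obtained from $\xi$ by a genuine move, so that $\xi(x) \geq 1$, then the reverse move satisfies $\eta^{\overset{\to}{yx}} = \xi$ and is itself genuine because $\eta(y) = \xi(y) + 1 \geq 1$; both carry probability $1/(2\,\card(E))$. Hence $p(\xi,\eta) = p(\eta,\xi)$ for every pair of distinct configurations, so the detailed balance equations $\pi(\xi)\,p(\xi,\eta) = \pi(\eta)\,p(\eta,\xi)$ hold for uniform $\pi$. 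As the chain is finite, irreducible and aperiodic, it converges to $\pi$ from any initial state, and therefore $\lim_{t\to\infty} P(\xi_t(x) = d)$ equals the $\pi$-probability of the event $\{\xi(x) = d\}$.

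Next I would evaluate this probability by counting. A stars-and-bars argument gives $\card(\A_{N,M}) = \binom{M+N-1}{N-1}$, while the configurations with $\xi(x) = d$ are in bijection with the distributions of the remaining $M - d$ dollars among the other $N-1$ agents, of which there are $\binom{M-d+N-2}{N-2}$. Taking the quotient of these two counts produces exactly the first displayed formula, valid for every $N$.

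For the large-population limit I would rewrite the ratio as
$$ (N-1)\,\frac{\prod_{i=0}^{d-1}(M-i)}{\prod_{j=0}^{d}(M+N-1-j)} $$
and set $M = TN$. For fixed $d$ the numerator behaves like $(TN)^d$ and the denominator like $\bigl((T+1)N\bigr)^{d+1}$, so sending $N \to \infty$ collapses the expression to the geometric weight $T^d/(T+1)^{d+1} = (T+1)^{-1}\bigl(T/(T+1)\bigr)^d$. Finally, expanding $(1+1/T)^{-1}$ and $(1+1/T)^{-d} = e^{-d/T}\bigl(1 + O(1/T^2)\bigr)$ for fixed $d$ as $T \to \infty$ yields $e^{-d/T}/T + o(1/T)$, the claimed second formula. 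The hard part will be the bookkeeping in this last step: I expect to need uniform control of the product of ratios, both to justify passing the $N \to \infty$ limit through the fixed finite product indexed by $d$, and to pin down the order of the remainder, so that the conclusion rests on more than a leading-order match. The detailed balance and counting steps, by contrast, should be clean.
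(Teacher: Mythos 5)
Your proposal is correct and follows essentially the same route as the paper: reversibility of the uniform measure on $\A_{N,M}$ via the constant transition probability $1/(2\card(E))$, a stars-and-bars count giving $\card(\A_{N,M}) = \binom{M+N-1}{N-1}$ and $\binom{M-d+N-2}{N-2}$ for the event $\{\xi(x)=d\}$, and elementary asymptotics of the resulting ratio. The only step you assert rather than argue is irreducibility and aperiodicity (the paper's Lemma~\ref{lem:convergence-link} derives these from connectivity of $G$ and the self-loop present at any configuration with an empty vertex), while the asymptotic bookkeeping you flag as the hard part is in fact routine for fixed $d$.
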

 Note that the second part of the theorem implies that, when the money temperature, i.e., the average number of dollars per
 individual, is large, the stationary distribution is well approximated by the exponential distribution~\eqref{eq:exponential},
 which proves the result conjectured in~\cite{dragulescu_yakovenko_2000} and shows that their result extends to spatially explicit
 models where the economic agents interact locally on a general connected graph, rather than globally.
 See Figure~\ref{fig:exp-dist} for simulation results showing the convergence to the exponential distribution when~$G$ is the complete graph. \vspace*{5pt} \\
\noindent {\bf Model 2} -- Like the first model, the second model is a finite irreducible discrete-time Markov chain.
 This implies that there exists a unique stationary distribution and that the fraction of time a given individual has a given
 amount of money converges in the long run to the corresponding coordinate of the stationary distribution.
 The process, however, is not aperiodic in general but aperiodicity holds for instance if the graph has an odd cycle.
 Using again time-reversibility, we prove that this stationary distribution is binomial.
\begin{theorem}[Poisson distribution] --
\label{th:dollar}
 For model~2,
 $$ \lim_{t \to \infty} \,\frac{1}{t} \,\sum_{s = 0}^{t - 1} \,\ind \{\xi_s (x) = d \} =
         {M \choose d} \bigg(\frac{\deg (x)}{\sum_{z \in V} \deg (z)} \bigg)^d \bigg(1 - \frac{\deg (x)}{\sum_{z \in V} \deg (z)} \bigg)^{M - d}. $$
 In particular, on all regular graphs and for all fixed~$d$,
 $$ \lim_{N \to \infty} \,\lim_{t \to \infty} \,\frac{1}{t} \,\sum_{s = 0}^{t - 1} \,\ind \{\xi_s (x) = d \} =
    \frac{T^d}{d!} \,e^{- T} \quad \hbox{where} \quad T = M/N. $$
\end{theorem}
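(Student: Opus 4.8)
The plan is to combine the ergodic theorem for finite irreducible Markov chains with an explicit guess for the stationary distribution that I verify through detailed balance. Since the excerpt already records that Model~2 is a finite irreducible Markov chain on the state space~$\A_{N, M}$, the ergodic theorem guarantees a unique stationary distribution~$\pi$ together with the almost sure convergence of the Cesàro time averages to the corresponding stationary probability, namely
$$ \lim_{t \to \infty} \frac{1}{t} \sum_{s = 0}^{t - 1} \ind \{\xi_s (x) = d \} = \pi (\{\xi : \xi (x) = d \}). $$
Note that only irreducibility is needed here, so the possible periodicity of the chain is harmless. Thus the whole problem reduces to computing~$\pi$ and reading off the marginal law of~$\xi (x)$.

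My guess is that~$\pi$ is the multinomial distribution that throws each of the~$M$ dollars independently onto the vertices, placing a given dollar at vertex~$z$ with probability~$p_z = \deg (z) / \sum_{w \in V} \deg (w)$; explicitly,
$$ \pi (\xi) = \binom{M}{\xi} \prod_{z \in V} p_z^{\xi (z)}, \qquad \binom{M}{\xi} = \frac{M!}{\prod_{z \in V} \xi (z)!}, $$
which is automatically supported on~$\A_{N, M}$ so that no renormalization is needed. To confirm this, I would check the detailed balance equation $\pi (\xi) \, p (\xi, \xi^{\overset{\to}{xy}}) = \pi (\xi^{\overset{\to}{xy}}) \, p (\xi^{\overset{\to}{xy}}, \xi)$ for each oriented edge~$\overset{\to}{xy}$. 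Substituting the multinomial ratio $\pi (\xi^{\overset{\to}{xy}}) / \pi (\xi) = (\xi (x) \, p_y) / ((\xi (y) + 1) \, p_x)$ together with the transition probabilities of Model~2, the equation collapses to the single requirement $p_x / \deg (x) = p_y / \deg (y)$, which is exactly the statement that~$p_z$ is proportional to~$\deg (z)$. Hence~$\pi$ is reversible and therefore stationary, and by uniqueness it is the stationary distribution. The marginal of the multinomial is binomial, $P_\pi (\xi (x) = d) = \binom{M}{d} \, p_x^d \, (1 - p_x)^{M - d}$, which is the first display of the theorem.

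For the large population limit I would specialize to a regular graph, on which every vertex has the same degree and so~$p_x = 1 / N$ for every~$x$. The marginal is then $\binomial (M, 1/N)$ with mean $M p_x = M / N = T$, and writing $M = T N$ I would invoke the classical Poisson limit theorem (law of rare events): as $N \to \infty$ with~$T$ fixed, $\binom{M}{d} (1/N)^d (1 - 1/N)^{M - d}$ converges to $T^d e^{-T} / d!$ through the standard factorizations $\binom{T N}{d} \, N^{-d} \to T^d / d!$ and $(1 - 1/N)^{T N - d} \to e^{-T}$.

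The genuinely creative step is guessing the multinomial form of~$\pi$; once the guess is in hand, the detailed balance verification is a short algebraic identity and the Poisson limit is the textbook Poisson approximation to the binomial. The only points requiring a word of care are the observation that the multinomial is already a probability measure on~$\A_{N, M}$, needing no restriction, and the fact that the time-average convergence rests on irreducibility alone and so survives the loss of aperiodicity; both follow immediately from the connectedness of~$G$.
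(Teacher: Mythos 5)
Your proposal is correct and follows essentially the same route as the paper: irreducibility plus the ergodic theorem reduce everything to the stationary distribution, which is identified as the multinomial with $p_z \propto \deg(z)$ via detailed balance, and the Poisson limit is the classical binomial-to-Poisson approximation. The only cosmetic difference is that you read off the binomial marginal as a standard property of the multinomial, whereas the paper derives it explicitly by summing over configurations with $\xi(x)=d$ using the multinomial theorem.
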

 The second part of the theorem shows that the fraction of time an individual owns~$d$ dollars simplifies and converges to the Poisson
 distribution with mean~$T$, the money temperature, in the large population limit as~$N \to \infty$.
 See Figure~\ref{fig:poisson-dist} for simulation results showing the convergence to the Poisson distribution when the graph~$G$ is the complete graph. \\
\indent The rest of this paper is devoted to proofs, with Section~\ref{sec:exponential} focusing on the first model and Section~\ref{sec:poisson}
 focusing on the second model.


\section{Proof of Theorem~\ref{th:link} (exponential distribution)}
\label{sec:exponential}

\indent To establish Theorem~\ref{th:link}, we first prove that Model~1 has a unique stationary distribution to which the process converges
 starting from any initial configuration and use time reversibility to show that this distribution is the uniform random variable on the
 state space~$\A_{N, M}$.
 The first part of the theorem easily follows by also counting the total number of configurations with~$M$ dollars while the second part can
 be deduced from the first part using some basic algebra. \\
\indent For each~$\xi : V \to \N$ and~$x \in V$, we let
 $$ \xi^x (z) = \xi (z) + \ind \{z = x \} $$
 be the configuration obtained from~$\xi$ by adding one dollar at vertex~$x$.
 Since each configuration with~$M$ dollars can be obtained from a configuration with~$M - 1$ dollars with the addition of one dollar
 at a specific vertex, we have
 $$ \A_{N, M} = \{\xi^x : \xi \in \A_{N, M - 1} \ \hbox{and} \ x \in V \} $$
 which we will use throughout the proofs.
 Note also that
 $$ (\xi^x)^{\overset{\to}{xy}} = \xi^y \in \A_{N, M} \quad \hbox{for all} \quad \xi \in \A_{N, M - 1} \ \hbox{and} \ \{x, y \} \in E. $$
 In the next lemma, we prove that Model~1 has a unique stationary distribution to which the process converges starting from any initial
 configuration.
\begin{lemma} --
\label{lem:convergence-link}
 Model~1 has a unique stationary distribution~$\pi$ and
 $$ \begin{array}{l} \lim_{t \to \infty} P (\xi_t = \xi) = \pi (\xi) \quad \hbox{for all configurations} \quad \xi, \xi_0 \in \A_{N, M}. \end{array} $$
\end{lemma}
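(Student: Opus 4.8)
The plan is to verify the three standard hypotheses --- finiteness, irreducibility, and aperiodicity --- that together force a discrete-time Markov chain to converge to a unique stationary distribution. Finiteness is immediate, since $\A_{N, M}$ is the set of ways of distributing $M$ indistinguishable dollars among $N$ agents, so that $\card (\A_{N, M}) = {M + N - 1 \choose N - 1} < \infty$. Once irreducibility and aperiodicity are in hand, the convergence theorem for finite, irreducible, aperiodic Markov chains supplies a unique stationary distribution $\pi$ together with $P (\xi_t = \xi) \to \pi (\xi)$ for every initial state $\xi_0$, which is precisely the assertion of the lemma.

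For irreducibility I would show that every configuration communicates with the fixed configuration $\bar\xi \in \A_{N, M}$ that piles all $M$ dollars on a single vertex $x_0$. Fix $\xi \in \A_{N, M}$. Because $G$ is connected, each dollar sitting at a vertex $v$ can be pushed one edge at a time along a path from $v$ to $x_0$, and each such push is an admissible transition of positive probability $1 / 2 \card (E)$, since at every step the source vertex holds at least the dollar being transported. Doing this for each dollar drives $\xi$ to $\bar\xi$, so $\bar\xi$ is reachable from any $\xi$. The reverse reachability uses that every elementary move $\overset{\to}{xy}$ admits a reverse move $\overset{\to}{yx}$ of positive probability: immediately after a dollar is moved from $x$ to $y$, the vertex $y$ carries at least one dollar, so the reverse transition is available and undoes the forward one. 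Reversing each step of a path from $\eta$ to $\bar\xi$ therefore yields a path from $\bar\xi$ to any $\eta$, and concatenating gives a positive-probability route from any $\xi$ to any $\eta$. The single-step identity $(\xi^x)^{\overset{\to}{xy}} = \xi^y$ recorded above is exactly what makes each push along a path transparent.

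For aperiodicity I would exhibit one state carrying a self-loop, which suffices because an irreducible chain has a common period. Assuming $N \geq 2$ and $M \geq 1$, the configuration $\bar\xi$ above leaves every vertex other than $x_0$ empty; choosing any $x \neq x_0$ and any incident edge $\{x, y\}$, which exists by connectivity, the attempted move $\overset{\to}{xy}$ does nothing because $\bar\xi (x) = 0$, so by \eqref{eq:transition} we have $\bar\xi^{\,\overset{\to}{xy}} = \bar\xi$ and hence $p (\bar\xi, \bar\xi) \geq 1 / 2 \card (E) > 0$. Thus $\bar\xi$ has period one, and by irreducibility the whole chain is aperiodic. The degenerate cases $M = 0$ (a single state, trivially stationary) and $N = 1$ (no edges) need no argument.

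The only mildly delicate point is irreducibility, and within it the claim that every elementary transition can be reversed; this is where I expect to spend the most care, since one must confirm that the reverse move is genuinely available with positive probability. As noted, this is automatic because the target vertex always holds the freshly delivered dollar, so the verification is routine, and everything else reduces to the textbook convergence theorem.
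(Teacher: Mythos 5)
Your proposal is correct and follows essentially the same route as the paper: finiteness plus irreducibility (from connectivity of $G$, moving dollars one edge at a time) plus aperiodicity (a self-loop at any configuration with an empty vertex), followed by the standard convergence theorem. The only organizational difference is that you establish irreducibility by routing every configuration through the hub state with all $M$ dollars at a single vertex, whereas the paper shows directly that $\xi^x$ and $\xi^y$ communicate for $\xi \in \A_{N,M-1}$ and then inducts on adding dollars; both arguments are valid and of comparable length.
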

\begin{proof}
 To prove the result, the main objective is to show that the discrete-time Markov chain~1 is both irreducible and aperiodic. \vspace*{5pt} \\
 {\bf Irreducibility} --
      Irreducibility follows from the fact that the graph~$G$ is connected.
      Indeed, for each pair of vertices~$(x, y)$, there exist
      $$ x = x_0, x_1, \ldots, x_t = y \quad \hbox{such that} \quad \{x_i, x_{i + 1} \} \in E \ \hbox{for all} \ i = 0, 1, \ldots, t - 1. $$
      In particular, for all~$\xi \in \A_{N, M - 1}$,
      $$ \begin{array}{rcl}
          p_t (\xi^x, \xi^y) & \n = \n & P (\xi_t = \xi^y \,| \,\xi_0 = \xi^x) \vspace*{4pt} \\
                          & \n \geq \n & p (\xi^{x_0}, \xi^{x_1}) \,p (\xi^{x_1}, \xi^{x_2}) \,\cdots \,p (\xi^{x_{n - 1}}, \xi^{x_n}) \vspace*{4pt} \\
                             & \n = \n & p (\xi^{x_0}, (\xi^{x_0})^{\overset{\to}{x_0 x_1}}) \,p (\xi^{x_1}, (\xi^{x_1})^{\overset{\to}{x_1 x_2}}) \,\cdots \,
                                         p (\xi^{x_{n - 1}}, (\xi^{x_{n - 1}})^{\overset{\to}{x_{n - 1} x_n}}) > 0, \end{array} $$
      showing that the two configurations~$\xi^x$ and~$\xi^y$ communicate.
      Using a simple induction, we deduce that, letting~$\xi \in \A_{N, 0}$ be the all-zero configuration and
      $$ (x_1, x_2, x_3, \ldots, x_M), (y_1, y_2, y_3, \ldots, y_M) \in V^M $$
      the two configurations
      $$ (\cdots ((\xi^{x_1})^{x_2})^{x_3} \cdots )^{x_M} \quad \hbox{and} \quad (\cdots ((\xi^{y_1})^{y_2})^{y_3} \cdots )^{y_M} $$
      also communicate.
      Since all the configurations in~$\A_{N, M}$ can be obtained from the all-zero configuration by adding~$M$ dollars, we deduce that all the configurations communicate, which by definition
      means that the process is irreducible. \vspace*{5pt} \\
 {\bf Aperiodicity} --
      For each~$\xi \in \A_{N, M}$ with~$\xi (x) = 0$,
      $$ \xi^{\overset{\to}{xy}} = \xi \quad \hbox{for all} \quad \{x, y \} \in E. $$
      In particular, for~$\xi \in \A_{N, M}$ such that~$\xi (z) = 0$ for some~$z \in V$,
      $$ \begin{array}{l} p (\xi, \xi) = \sum_{z \in V} \deg (z) \,\ind \{\xi (z) = 0 \} / (2 \card (E)) > 0, \end{array} $$
      showing that configurations with at least one vertex with zero dollar have period one.
      Since the process is also irreducible, all the configurations must have the same period one, from which it follows that the process is aperiodic. \vspace*{5pt} \\
 Irreducibility and the fact that the state space~$\A_{N, M}$ is finite imply the existence and uniqueness of a stationary distribution~$\pi$.
 Aperiodicity also implies that, regardless of the initial configuration, the probability that the process is in configuration~$\xi$ converges to~$\pi (\xi)$.
 For a proof of these two classical results, we refer to~\cite[Section~1.7]{durrett_2012}.
\end{proof} \\ \\
 The next lemma shows that the unique stationary distribution~$\pi$ is the uniform distribution on the state space~$\A_{N, M}$ of the process.
\begin{lemma} --
\label{lem:uniform-link}
 We have~$\pi = \uniform (\A_{N, M})$.
\end{lemma}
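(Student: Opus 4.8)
The plan is to invoke Lemma~\ref{lem:convergence-link}, which guarantees that Model~1 has a \emph{unique} stationary distribution; hence it suffices to exhibit one stationary measure and verify that it is uniform. I would do this by showing that the chain is reversible with respect to the uniform measure, which for a constant~$\pi$ reduces to checking that the transition matrix is symmetric, i.e.,
$$ p (\xi, \eta) = p (\eta, \xi) \quad \hbox{for all} \quad \xi, \eta \in \A_{N, M}. $$
Once symmetry is established, the detailed balance equations~$\pi (\xi) \,p (\xi, \eta) = \pi (\eta) \,p (\eta, \xi)$ hold trivially when~$\pi = \uniform (\A_{N, M})$, so the uniform measure is reversible and therefore stationary. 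The uniqueness supplied by Lemma~\ref{lem:convergence-link} then forces~$\pi = \uniform (\A_{N, M})$, which is the claim.

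To prove symmetry, I would argue by cases on the pair~$(\xi, \eta)$. The only off-diagonal entries that can be positive are those with~$\eta = \xi^{\overset{\to}{xy}}$ for some oriented edge~$\overset{\to}{xy}$ and~$\xi (x) \geq 1$, in which case~$\xi$ and~$\eta$ differ by exactly one dollar moved from~$x$ to~$y$, so the oriented edge is uniquely determined by~$\xi$ and~$\eta$ and~$p (\xi, \eta) = 1 / (2 \card (E))$. The key point is that the reverse transition is always legal: since~$\eta (y) = \xi (y) + 1 \geq 1$, moving one dollar back from~$y$ to~$x$ is a permitted move, it satisfies~$\eta^{\overset{\to}{yx}} = \xi$, and it too is realized by a unique oriented edge, so~$p (\eta, \xi) = 1 / (2 \card (E)) = p (\xi, \eta)$. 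For every other pair of distinct configurations both transition probabilities vanish, and the diagonal entries obviously satisfy~$p (\xi, \xi) = p (\xi, \xi)$, completing the verification.

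The main obstacle is conceptual rather than computational. The dynamics is not symmetric at the level of individual moves, because of the rule that nothing happens when the chosen source vertex is empty, and one might worry that this effectively reflecting behavior destroys reversibility. The observation that rescues the argument is that this rule never blocks a reverse move: whenever a forward move from~$x$ to~$y$ is legal, so that~$x$ started with at least one dollar, the target~$y$ necessarily ends up with at least one dollar, so the reverse move from~$y$ to~$x$ is automatically legal as well. Thus the empty-vertex rule only affects the diagonal self-loop probabilities, where symmetry is automatic, and the off-diagonal structure of the transition matrix remains perfectly symmetric.
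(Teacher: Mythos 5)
Your proposal is correct and follows essentially the same route as the paper: both arguments verify that every realizable off-diagonal transition has probability exactly $1/(2 \card (E))$, that the reverse move is always legal because the target vertex ends up with at least one dollar, and hence that detailed balance holds for the uniform measure, with uniqueness from Lemma~\ref{lem:convergence-link} finishing the job. Your explicit remark that the empty-vertex rule only contributes to the diagonal entries is exactly the observation underlying the paper's case analysis.
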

\begin{proof}
 Depending on whether the oriented edge selected at random starts from a vertex with zero dollar or not, the configuration either remains unchanged or is obtained from the configuration at the
 previous time step by moving one dollar along the oriented edge.
 In equations, this means that if the transition probability~$p (\xi, \xi') > 0$ then we have the following alternative:
\begin{itemize}
 \item $\xi = \xi'$ with~$\xi (z) = 0$ for some~$z \in V$ in which case
       $$ \begin{array}{l} p (\xi, \xi') = p (\xi, \xi) = \sum_{z \in V} \deg (z) \,\ind \{\xi (z) = 0 \} / (2 \card (E)), \end{array} $$
 \item $\xi = \eta^x$ and~$\xi' = \eta^y$ for some~$\eta \in \A_{N, M - 1}$ and~$\{x, y \} \in E$ in which case
       $$ \begin{array}{l} p (\xi, \xi') = p (\eta^x, \eta^y) = 1 / (2 \card (E)). \end{array} $$
\end{itemize}
 This shows in particular that
\begin{equation}
\label{eq:uniform-link-1}
  p (\xi, \xi') \neq 0 \quad \hbox{if and only if} \quad p (\xi', \xi) \neq 0.
\end{equation}
 Also, when~$p (\xi, \xi') \neq 0$ with~$\xi \neq \xi'$ and~$\pi = \uniform (\A_{N, M})$,
 $$ \pi (\xi) \,p (\xi, \xi') = 1 / (2 \card (E) \card (\A_{N, M})) = \pi (\xi') \,p (\xi', \xi). $$
 The left and right-hand sides are trivially equal when~$\xi = \xi'$ while~\eqref{eq:uniform-link-1} shows that the equality also holds when~$p (\xi, \xi') = 0$.
 This implies that the process is time reversible and that the uniform distribution~$\pi$ is indeed a stationary distribution since
 $$ \begin{array}{rcl}
      P_{\pi} (\xi_1 = \xi) & \n = \n & \sum_{\xi' \in \A_{N, M}} \pi (\xi') \,p (\xi', \xi) = \sum_{\xi' \in \A_{N, M}} \pi (\xi) \,p (\xi, \xi') \vspace*{4pt} \\
                            & \n = \n & \pi (\xi) \,\sum_{\xi' \in \A_{N, M}} p (\xi, \xi') = \pi (\xi). \end{array} $$
 This completes the proof.
\end{proof} \\ \\
 It follows from the previous lemma that
 $$ \pi (\xi) = \frac{1}{\card (\A_{N, M})} \quad \hbox{for all} \quad \xi \in \A_{N, M}. $$
 In particular, to obtain a more explicit expression of the stationary distribution, it suffices to compute the number of configurations.
 This is done in the next lemma.
\begin{lemma} --
\label{lem:number-link}
 For all positive integers~$N, M \in \N^*$,
 $$ \card (\A_{N, M}) = {M + N - 1 \choose N - 1}. $$
\end{lemma}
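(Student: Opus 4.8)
The goal is to count the number of configurations $\xi \in \N^V$ with $\sum_{x \in V} \xi(x) = M$, where $N = \card(V)$. This is the classic \emph{stars and bars} problem: I need to count the number of ways to distribute $M$ indistinguishable dollars among $N$ distinguishable agents, with any agent allowed to receive zero dollars.

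The cleanest approach is a direct bijective argument. I would encode each configuration as a binary string, or equivalently as a sequence of $M$ stars (representing the dollars) and $N-1$ bars (representing the dividers between consecutive agents). Lining up the agents $x_1, \ldots, x_N$ in a fixed order, a configuration $\xi$ corresponds to the arrangement in which we first place $\xi(x_1)$ stars, then a bar, then $\xi(x_2)$ stars, then a bar, and so on, ending with $\xi(x_N)$ stars after the last of the $N-1$ bars. This map is a bijection between $\A_{N,M}$ and the set of arrangements of $M$ stars and $N-1$ bars in a row of $M + N - 1$ symbols, since recovering $\xi$ from such an arrangement just amounts to counting the stars in each of the $N$ blocks cut out by the bars. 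The number of such arrangements is the number of ways to choose which $N-1$ of the $M + N - 1$ positions hold the bars, namely $\binom{M+N-1}{N-1}$, which is exactly the claimed count.

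As an alternative that fits naturally with the inductive structure already set up in the excerpt (where $\A_{N,M} = \{\xi^x : \xi \in \A_{N,M-1}, \, x \in V\}$), I could instead proceed by induction on $M$, using the Pascal-type recurrence for the binomial coefficients together with a case split on the number of dollars held by a distinguished agent. Conditioning on $\xi(x_N) = k$ gives $\card(\A_{N,M}) = \sum_{k=0}^{M} \card(\A_{N-1, M-k})$, and the binomial identity $\sum_{k} \binom{M-k+N-2}{N-2} = \binom{M+N-1}{N-1}$ closes the induction. I would also need to handle the base cases (e.g. $N=1$, where there is a single configuration, matching $\binom{M}{0} = 1$).

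I expect no serious obstacle here: this is a standard combinatorial identity and both routes are routine. The only point requiring a little care is verifying that the stars-and-bars encoding is genuinely a bijection — in particular that configurations with empty agents (runs of adjacent bars, or bars at either end of the string) are correctly accounted for, which is precisely what makes the count $\binom{M+N-1}{N-1}$ rather than something more restrictive. I would therefore lead with the bijective proof, as it is the most transparent and leaves no residual algebra.
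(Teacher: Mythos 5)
Your primary (stars-and-bars) argument is exactly the paper's proof: the paper's map $\phi(\xi) = \{\xi(x_1)+1,\ \xi(x_1)+\xi(x_2)+2,\ \ldots\}$ is precisely the set of bar positions in your encoding, and the paper likewise verifies injectivity and surjectivity onto the $(N-1)$-element subsets of $\{1,\ldots,M+N-1\}$. Your proposal is correct and takes essentially the same route.
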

\begin{proof}
 Write~$V = \{x_1, x_2, \ldots, x_N \}$ and, for each~$\xi \in \A_{N, M}$, let
 $$ \phi (\xi) = \{\xi (x_1) + 1, \xi (x_1) + \xi (x_2) + 2, \ldots, \xi (x_1) + \cdots + \xi (x_{N - 1}) + N - 1 \}. $$
 This defines a function~$\phi : \A_{N, M} \to \B_{N, M}$ where
 $$ \B_{N, M} = \hbox{set of subsets of~$\{1, 2, \ldots, M + N - 1 \}$ with~$N - 1$ elements} $$
 and we now prove that this function is bijective. \vspace*{5pt} \\
 {\bf Injectivity} -- Let~$\xi, \xi' \in \A_{N, M}$ with~$\phi (\xi) = \phi (\xi')$. Then,
      $$ \xi (x_i) = \xi' (x_i) \quad \hbox{for all} \quad i = 1, 2, \ldots, N - 1. $$
      Since both configurations contain~$M$ dollars, we also have
      $$ \xi (x_N) = M - \xi (x_1) - \cdots - \xi (x_{N - 1}) = M - \xi' (x_1) - \cdots - \xi' (x_{N - 1}) = \xi' (x_N) $$
      showing that~$\xi = \xi'$ and that~$\phi$ is injective. \vspace*{5pt} \\
 {\bf Surjectivity} -- Let~$B \in \B_{N, M}$ and write
      $$ B = \{n_1, n_2, \ldots, n_{N - 1} \} \quad \hbox{with} \quad 1 \leq n_1 < n_2 < \cdots < n_{N - 1} \leq M + N - 1. $$
      Then, define the configuration~$\xi : V \to \N$ as
      $$ \xi (x_i) = \left\{\begin{array}{lcl} n_1 - 1               & \hbox{for} & i = 1 \vspace*{2pt} \\
                                               n_i - n_{i - 1} - 1   & \hbox{for} & i = 2, 3, \ldots, N - 1  \vspace*{2pt} \\
                                               M + N - n_{N - 1} - 1 & \hbox{for} & i = N. \end{array} \right. $$
      We easily check that~$\xi \in \A_{N, M}$ and~$\phi (\xi) = B$, which shows surjectivity. \vspace*{5pt} \\
 In conclusion, we have
 $$ \card (\A_{N, M}) = \card (\B_{N, M}) = {M + N - 1 \choose N - 1} $$
 where the first equation follows from the bijectivity of~$\phi$, while the second equation is obvious in view of the definition of the set~$\B_{N, M}$.
\end{proof} \\ \\
 Using Lemmas~\ref{lem:convergence-link}--\ref{lem:number-link}, we can now prove Theorem~\ref{th:link}. \\ \\
\begin{proofof}{Theorem~\ref{th:link}}
 It follows from Lemmas~\ref{lem:convergence-link}--\ref{lem:uniform-link} that
\begin{equation}
\label{eq:link-1}
  \begin{array}{rcl}
  \lim_{t \to \infty} P (\xi_t (x) = d) & \n = \n & \pi (\{\xi \in \A_{N, M} : \xi (x) = d \}) \vspace{4pt} \\
                                        & \n = \n & \card \,\{\xi \in \A_{N, M} : \xi (x) = d \} / \card (\A_{N, M}). \end{array}
\end{equation}
 for all~$(x, d) \in V \times \{0, 1, \ldots, M \}$, regardless of the initial configuration.
 In addition, the number of configurations with exactly~$d$ dollars at vertex~$x$ is given by
\begin{equation}
\label{eq:link-2}
  \card \,\{\xi \in \A_{N, M} : \xi (x) = d \} = \card (\A_{N - 1, M - d}).
\end{equation}
 Combining~\eqref{eq:link-1}--\eqref{eq:link-2} and using Lemma~\ref{lem:number-link}, we get
\begin{equation}
\label{eq:link-3}
  \lim_{t \to \infty} P (\xi_t (x) = d) = \displaystyle \frac{\card (\A_{N - 1, M - d})}{\card (\A_{N, M})} = {M + N - d - 2 \choose N - 2} \bigg/ {M + N - 1 \choose N - 1}
\end{equation}
 which proves the first part of the theorem.
 To deduce the second part of the theorem, we first rewrite the right-hand side of~\eqref{eq:link-3} as
 $$ \begin{array}{l}
    \displaystyle \frac{(M + N - d - 2)!}{(N - 2)! (M - d)!} \ \frac{(N - 1)! M!}{(M + N - 1)!} \vspace*{8pt} \\ \hspace*{20pt} = \
    \displaystyle \frac{(N - 1)!}{(N - 2)!} \ \frac{M!}{(M - d)!} \ \frac{(M + N - d - 2)!}{(M + N - 1)!} \vspace*{8pt} \\ \hspace*{20pt} = \
    \displaystyle \frac{M \,(M - 1) \cdots (M - d + 1)(N - 1)}{(M + N - 1)(M + N - 2) \cdots (M + N - d - 1)}. \end{array} $$
 Letting~$T = M/N$ be the average number of dollars per vertex, which by analogy with classical physics is called the money temperature, and observing that both the numerator and the denominator
 are the product of~$d + 1$ terms, for~$d \in \N$ fixed,
 $$ \begin{array}{l}
    \displaystyle \lim_{N \to \infty} \,\lim_{t \to \infty} \,P (\xi_t (x) = d) = \frac{N M^d}{(M + N)^{d + 1}} \vspace*{8pt} \\ \hspace*{40pt} = \
    \displaystyle \bigg(\frac{1}{T + 1} \bigg) \bigg(\frac{T}{T + 1} \bigg)^d = \bigg(\frac{1}{T + 1} \bigg) \,e^{- d \ln \left(1 + \frac{1}{T} \right)}. \end{array} $$
 In particular, for large money temperatures,
 $$ \lim_{N \to \infty} \,\lim_{t \to \infty} \,P (\xi_t (x) = d) =
    \bigg(\frac{1}{T} + o \bigg(\frac{1}{T} \bigg) \bigg) \,e^{- d \left(\frac{1}{T} + o \left(\frac{1}{T} \right) \right)} =
    \frac{e^{- d/T}}{T} + o \bigg(\frac{1}{T} \bigg) $$
 showing that, at least when the temperature is high and in the large population limit, the number of dollars at a given vertex at equilibrium is well
 approximated by the exponential random variable with parameter~$1/T$.
 This completes the proof.
\end{proofof}


\section{Proof of Theorem~\ref{th:dollar} (Poisson distribution)}
\label{sec:poisson}

\indent To establish Theorem~\ref{th:dollar}, which focuses on the second model, we again start by proving the existence and uniqueness
 of the stationary distribution.
 The process, however, might not be aperiodic, so we only have convergence of the fraction of time spent in each state rather than convergence
 of the multi-step transition probabilities.
 For model~2, the stationary distribution is the multinomial random variable, which can be guessed from the stationary distribution
 of the symmetric random walk on the connected graph~$G$.
 Both parts of the theorem easily follow.
\begin{lemma} --
\label{lem:convergence-dollar}
 Model~2 has a unique stationary distribution~$\pi$ and
\begin{equation}
\label{eq:convergence-dollar-0}
 \lim_{t \to \infty} \,\frac{1}{t} \,\sum_{s = 0}^{t - 1} \,\ind \{\xi_s (x) = d \} = \n \sum_{\xi \in \A_{N, M}} \n \pi (\xi) \,\ind \{\xi (x) = d \} \quad \hbox{for all} \quad \xi_0 \in \A_{N, M}.
\end{equation}
\end{lemma}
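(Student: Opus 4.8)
The plan is to follow the proof of Lemma~\ref{lem:convergence-link} almost verbatim up to the point where aperiodicity was invoked, and then to replace that last step by the ergodic theorem for time averages, which does not require aperiodicity.

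First I would establish irreducibility exactly as for Model~1. The only structural difference is the transition rule: here $p(\xi, \xi^{\overset{\to}{xy}}) = \xi(x)/(M \deg(x))$ is strictly positive precisely when $\xi(x) > 0$, which is the very same positivity constraint that drove the argument in Lemma~\ref{lem:convergence-link}. Concretely, for any $\xi \in \A_{N, M-1}$ and any path $x = x_0, x_1, \ldots, x_n = y$ in the connected graph $G$, each intermediate configuration $\xi^{x_i}$ carries at least one dollar at $x_i$ (a dollar was just added there), so that $p(\xi^{x_i}, (\xi^{x_i})^{\overset{\to}{x_i x_{i+1}}}) > 0$. The same telescoping product then shows that $\xi^x$ and $\xi^y$ communicate, and the same induction over the $M$ dollars added to the all-zero configuration shows that every pair of configurations in $\A_{N, M}$ communicates. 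Because $\A_{N, M}$ is finite, irreducibility immediately yields the existence and uniqueness of a stationary distribution $\pi$.

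The essential point where Model~2 departs from Model~1 is that the chain need not be aperiodic: on a bipartite graph the parity of the total amount of money on one side of the bipartition flips at every step, forcing period two, so that $P(\xi_t = \xi)$ generally fails to converge. Consequently, instead of convergence of the multi-step transition probabilities, I would invoke the ergodic theorem for finite irreducible Markov chains (see~\cite{durrett_2012}), which guarantees that for every bounded $f : \A_{N, M} \to \R$,
$$ \lim_{t \to \infty} \frac{1}{t} \sum_{s = 0}^{t - 1} f(\xi_s) = \sum_{\xi \in \A_{N, M}} \pi(\xi) \, f(\xi) $$
almost surely and for every initial configuration $\xi_0$. Choosing $f(\xi) = \ind \{\xi(x) = d \}$ delivers~\eqref{eq:convergence-dollar-0} at once.

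The main obstacle is conceptual rather than technical: one must recognize that aperiodicity is both unavailable and unnecessary here, and that the correct tool for the long-run fraction of time spent in a state is the ergodic theorem rather than convergence of the $t$-step kernel. Once this substitution is made, the irreducibility argument is a direct transcription of the one given for Model~1, and no further computation is required at this stage — the explicit multinomial form of $\pi$ is deferred to the proof of Theorem~\ref{th:dollar}.
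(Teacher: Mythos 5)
Your proposal is correct and matches the paper's proof essentially verbatim: irreducibility is transferred from the Model~1 argument (using that $p(\eta^x,\eta^y) = (\eta(x)+1)/(M\deg(x)) > 0$ along any path in the connected graph), and the ergodic theorem for finite irreducible chains replaces aperiodicity to give convergence of time averages. The paper cites the same result from~\cite{durrett_2012} and makes the identical choice $f(\xi) = \ind\{\xi(x) = d\}$.
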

\begin{proof}
 Using again that the graph~$G$ is connected and following the same argument as in the proof of Lemma~\ref{lem:convergence-link}, we prove that the
 process is irreducible.
 Since in addition the state space is finite, there exists a unique stationary distribution~$\pi$ and, by \cite[Theorem~1.23]{durrett_2012},
\begin{equation}
\label{eq:convergence-dollar-1}
 \lim_{t \to \infty} \,\frac{1}{t} \,\sum_{s = 0}^{t - 1} \,f (\xi_s) = \n \sum_{\xi \in \A_{N, M}} \n f (\xi) \,\pi (\xi) \quad \hbox{for all} \quad \xi_0 \in \A_{N, M}
\end{equation}
 and all bounded functions~$f : \A_{N, M} \to \R$. Taking
 $$ f (\xi) = \ind \{\xi (x) = d \} \quad \hbox{for a fixed pair} \ (x, d) \in V \times \{0, 1, \ldots, M \}, $$
 equation~\eqref{eq:convergence-dollar-1} becomes~\eqref{eq:convergence-dollar-0}.
 This completes the proof.
\end{proof} \\ \\
 Note that we have the stronger convergence
 $$ \lim_{t \to \infty} \,P (\xi_s (x) = d) = \n \sum_{\xi \in \A_{N, M}} \n \pi (\xi) \,\ind \{\xi (x) = d \} \quad \hbox{for all} \quad \xi_0 \in \A_{N, M} $$
 whenever the process is also aperiodic.
 This is not true for all connected graphs~$G$ but aperiodicity holds for instance if the graph has an odd cycle.
 The next step is to find an explicit expression of the distribution~$\pi$.
 Thinking of the money circulating in the system as a set of~$M$ one-dollar bills, the process~$(X_t)$ that keeps track of the location of a given bill
\begin{itemize}
 \item stays put at each time step with probability~$1 - 1/M$ and \vspace*{3pt}
 \item jumps according to the symmetric random walk on~$G$ with probability~$1/M$.
\end{itemize}
 This process is known to be reversible with stationary distribution
 $$ \bar \pi (w) = \frac{\deg (w)}{\sum_{z \in V} \deg (z)} \quad \hbox{for all} \quad w \in V. $$
 In particular, a good candidate for the stationary distribution~$\pi$ is the distribution in which each bill is independently at vertex~$w$ with
 probability~$\bar \pi (w)$, i.e.,
\begin{equation}
\label{eq:multinomial-dollar-0}
 \pi (\xi) = {M \choose \xi (1), \ldots, \xi (N)} \,\prod_{w \in V} \,(\bar \pi (w))^{\xi (w)} \quad \hbox{where} \quad \bar \pi (w) = \frac{\deg (w)}{\sum_{z \in V} \deg (z)}
\end{equation}
 for all~$\xi \in \A_{N, M}$.
 This is proved in the next lemma.
\begin{lemma} --
\label{lem:multinomial-dollar}
 The distribution~$\pi$ given in~\eqref{eq:multinomial-dollar-0} is stationary for model~2.
\end{lemma}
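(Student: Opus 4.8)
The plan is to prove that $\pi$ is \emph{reversible} for Model~2, i.e. that it satisfies the detailed balance equations
$$ \pi (\xi) \, p (\xi, \xi') = \pi (\xi') \, p (\xi', \xi) \quad \hbox{for all} \quad \xi, \xi' \in \A_{N, M}, $$
from which stationarity follows immediately by summing over~$\xi'$, exactly as in the last display of the proof of Lemma~\ref{lem:uniform-link}. Since the only transitions with positive probability move a single dollar along an oriented edge, it suffices to verify the balance equation for pairs of the form~$\xi$ and~$\xi' = \xi^{\overset{\to}{xy}}$ with~$\{x, y \} \in E$ and~$\xi (x) \geq 1$; for every other pair both sides vanish. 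Note that, unlike Model~1, Model~2 carries no holding probability, which is the source of the possible loss of aperiodicity mentioned above, but this plays no role in the reversibility computation.

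First I would compute the ratio of stationary weights. Passing from~$\xi$ to~$\xi' = \xi^{\overset{\to}{xy}}$ changes only the two coordinates~$x$ and~$y$, replacing~$\xi (x)$ by~$\xi (x) - 1$ and~$\xi (y)$ by~$\xi (y) + 1$. In the multinomial weight~\eqref{eq:multinomial-dollar-0}, the multinomial coefficient therefore picks up a factor~$\xi (x) / (\xi (y) + 1)$ coming from the factorials~$\xi (x)!$ and~$\xi (y)!$, while the product~$\prod_w \bar \pi (w)^{\xi (w)}$ picks up a factor~$\bar \pi (y) / \bar \pi (x) = \deg (y) / \deg (x)$ since the two factors~$\sum_z \deg (z)$ cancel. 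This gives
$$ \frac{\pi (\xi')}{\pi (\xi)} = \frac{\xi (x)}{\xi (y) + 1} \cdot \frac{\deg (y)}{\deg (x)}. $$

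Next I would compute the ratio of transition probabilities directly from the rates of Model~2. The forward rate is~$p (\xi, \xi') = \xi (x) / (M \deg (x))$, while the reverse move takes one of the~$\xi' (y) = \xi (y) + 1$ dollars now sitting at~$y$ back to~$x$, so that~$p (\xi', \xi) = (\xi (y) + 1) / (M \deg (y))$. The quotient~$p (\xi, \xi') / p (\xi', \xi)$ then equals exactly the ratio~$\pi (\xi') / \pi (\xi)$ found above, which is precisely the detailed balance condition. I expect the only delicate point to be the bookkeeping in the reverse direction, namely correctly reading off that the number of dollars available at~$y$ in the post-transition configuration~$\xi'$ is~$\xi (y) + 1$ rather than~$\xi (y)$; once the two ratios are lined up the identity is immediate and no further estimation is required.
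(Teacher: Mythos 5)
Your proposal is correct and takes essentially the same approach as the paper: both verify detailed balance for single-dollar moves by matching the ratio of the multinomial weights against the ratio of the transition probabilities, the paper merely parametrizing the pair of configurations as~$\eta^x, \eta^y$ with~$\eta \in \A_{N, M - 1}$ rather than as~$\xi$ and~$\xi^{\overset{\to}{xy}}$. Your ratios~$\pi (\xi') / \pi (\xi) = \frac{\xi (x)}{\xi (y) + 1} \cdot \frac{\deg (y)}{\deg (x)} = p (\xi, \xi') / p (\xi', \xi)$ agree exactly with the paper's identity after the change of variables~$\xi = \eta^x$, $\xi' = \eta^y$.
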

\begin{proof}
 First, we observe that, for all~$\eta \in \A_{N, M - 1}$ and~$\{x, y \} \in E$,
\begin{equation}
\label{eq:multinomial-dollar-1}
  p (\eta^x, \eta^y) = P (\xi_{t + 1} = \eta^y \,| \,\xi_t = \eta^x) = \frac{\eta^x (x)}{M \deg (x)} = \frac{\eta (x) + 1}{M \deg (x)}.
\end{equation}
 In addition, for all~$\eta \in \A_{N, M - 1}$ and~$x \in V$,
\begin{equation}
\label{eq:multinomial-dollar-2}
 \begin{array}{rcl}
  \pi (\eta^x) & \n = \n &
  \displaystyle {M \choose \eta^x (1), \ldots, \eta^x (N)} \,\prod_{w \in V} \,(\bar \pi (w))^{\eta^x (w)} \vspace*{8pt} \\ & \n = \n &
  \displaystyle {M \choose \eta (1), \ldots, \eta (N)} \bigg(\frac{1}{\eta (x) + 1} \bigg) \bigg(\prod_{w \in V} \,(\bar \pi (w))^{\eta (w)} \bigg) \,\bar \pi (x).
 \end{array}
\end{equation}
 Combining~\eqref{eq:multinomial-dollar-1}--\eqref{eq:multinomial-dollar-2}, we deduce that, for all~$\{x, y \} \in E$,
 $$ \frac{\pi (\eta^x)}{\pi (\eta^y)} = \frac{\eta (y) + 1}{\eta (x) + 1} \ \frac{\bar \pi (x)}{\bar \pi (y)}
                                      = \frac{\eta (y) + 1}{\eta (x) + 1} \ \frac{\deg (x)}{\deg (y)} = \frac{p (\eta^y, \eta^x)}{p (\eta^x, \eta^y)} $$
 while it is trivial that
 $$ p (\xi, \xi') = 0 \quad \hbox{for all} \quad (\xi, \xi') \notin \{(\eta^x, \eta^y) : \eta \in \A_{N, M - 1} \ \hbox{and} \ \{x, y \} \in E \}. $$
 This shows that the process is time reversible and, as in the proof of Lemma~\ref{lem:uniform-link}, that the distribution given in~\eqref{eq:multinomial-dollar-0}
 is indeed the stationary distribution of model~2.
\end{proof} \\ \\
 Using Lemmas~\ref{lem:convergence-dollar}--\ref{lem:multinomial-dollar}, we can now prove Theorem~\ref{th:dollar}. \\ \\
\begin{proofof}{Theorem~\ref{th:dollar}}
 Fix a vertex~$x \in V$ and write
 $$ V = \{x, w_1, w_2, \ldots, w_{N - 1} \}. $$
 By Lemma~\ref{lem:multinomial-dollar}, for all~$\xi \in \A_{N, M}$ such that~$\xi (x) = d$,
\begin{equation}
\label{eq:2}
  \begin{array}{rcl}
  \pi (\xi) & \n = \n & \displaystyle {M \choose d, \xi (w_1), \ldots, \xi (w_{N - 1})} \bigg(\prod_{w \neq x} \,(\bar \pi (w))^{\xi (w)} \bigg) (\bar \pi (x))^d \vspace*{8pt} \\
            & \n = \n & \displaystyle {M \choose d} {M - d \choose \xi (w_1), \ldots, \xi (w_{N - 1})} \bigg(\prod_{i = 1}^{N - 1} \,(\bar \pi (w_i))^{\xi (w_i)} \bigg) (\bar \pi (x))^d. \end{array}
\end{equation}
 By~\eqref{eq:2} and the multinomial theorem, the right-hand side of~\eqref{eq:convergence-dollar-0} becomes
 $$ \begin{array}{l}
    \displaystyle \sum_{\xi \in \A_{N, M}} \n {M \choose d} {M - d \choose \xi (w_1), \ldots, \xi (w_{N - 1})} \bigg(\prod_{i = 1}^{N - 1} \,(\bar \pi (w_i))^{\xi (w_i)} \bigg) (\bar \pi (x))^d \,\ind \{\xi (x) = d \} \vspace*{8pt} \\ \hspace*{20pt} =
    \displaystyle {M \choose d} (\bar \pi (x))^d \n \sum_{\xi (w_1) + \cdots + \xi (w_{N - 1}) = M - d} {M - d \choose \xi (w_1), \ldots, \xi (w_{N - 1})} \bigg(\prod_{i = 1}^{N - 1} \,(\bar \pi (w_i))^{\xi (w_i)} \bigg) \vspace*{8pt} \\ \hspace*{20pt} =
    \displaystyle {M \choose d} (\bar \pi (x))^d \,\bigg(\sum_{i = 1}^{N - 1} \,\bar \pi (w_i) \bigg)^{M - d} =
    \displaystyle {M \choose d} (\bar \pi (x))^d \,(1 - \bar \pi (x))^{M - d}. \end{array} $$
 Applying Lemma~\ref{lem:convergence-dollar} and recalling~\eqref{eq:multinomial-dollar-0}, we deduce that
 $$ \begin{array}{l}
    \displaystyle \lim_{t \to \infty} \,\frac{1}{t} \,\sum_{s = 0}^{t - 1} \,\ind \{\xi_s (x) = d \} =
    \displaystyle {M \choose d} (\bar \pi (x))^d \,(1 - \bar \pi (x))^{M - d} \vspace*{8pt} \\ \hspace*{80pt} =
    \displaystyle {M \choose d} \bigg(\frac{\deg (x)}{\sum_{z \in V} \deg (z)} \bigg)^d \bigg(1 - \frac{\deg (x)}{\sum_{z \in V} \deg (z)} \bigg)^{M - d} \end{array} $$
 for all~$\xi_0 \in \A_{N, M}$.
 This proves the first part of the theorem.
 To deduce the second part, we simply observe that, for all regular graphs with~$N$ vertices,
 $$ \bar \pi (w) = \frac{\deg (w)}{\sum_{z \in V} \deg (z)} = \frac{1}{N} \quad \hbox{for all} \quad w \in V. $$
 In particular, taking the limit as~$N \to \infty$ and recalling~$T = M/N$, we get
 $$ \begin{array}{l}
    \displaystyle \lim_{N \to \infty} \lim_{t \to \infty} \,\frac{1}{t} \,\sum_{s = 0}^{t - 1} \,\ind \{\xi_s (x) = d \} =
    \lim_{N \to \infty} {NT \choose d} \bigg(\frac{1}{N} \bigg)^d \bigg(1 - \frac{1}{N} \bigg)^{NT - d} \vspace*{8pt} \\ \hspace*{25pt} = \
    \displaystyle \lim_{N \to \infty} \bigg(\frac{NT (NT - 1) \cdots (NT - d + 1)}{d! \,N^d} \bigg) \bigg(1 - \frac{1}{N} \bigg)^{NT}
      = \frac{T^d}{d!} \,e^{- T}. \end{array} $$
 This completes the proof.
\end{proofof}




\end{document}